\numberwithin{equation}{section}
 \def\@textbottom{\vskip \z@ \@plus 17pt}
 \let\@texttop\relax
\newtheorem{thm}{Theorem}[section]
\newtheorem{cor}[thm]{Corollary}
\newtheorem{lem}[thm]{Lemma}
\newtheorem{prop}[thm]{Proposition}
\theoremstyle{plain}
\theoremstyle{definition}
\theoremstyle{remark}
\Crefname{lem}{Lemma}{Lemmas}
\def\XXint#1#2#3{{\setbox0=\hbox{$#1{#2#3}{\int}$ }
\vcenter{\hbox{$#2#3$ }}\kern-.6\wd0}}
\renewcommand{\glossarysection}[2][]{} 
\DeclarePairedDelimiter{\abs}{\lvert}{\rvert}
\DeclarePairedDelimiter{\norm}{\lVert}{\rVert}
\DeclarePairedDelimiter{\bra}{(}{)}
\DeclarePairedDelimiter{\pra}{[}{]}
\DeclarePairedDelimiter{\set}{\{}{\}}
\DeclareMathAlphabet{\mathup}{OT1}{\familydefault}{m}{n}
\newcommand{\dx}[1]{\mathop{}\!\mathup{d} #1}
\newcommand{\N}{{\mathbb N}}
\newcommand{\R}{{\mathbb R}}
\newcommand{\eps}{{\varepsilon}}
\newcommand{\al}{\alpha}
\DeclareMathOperator{\supp}{supp}
\renewcommand{\tilde}{\widetilde}
\renewcommand{\eps}{\varepsilon}
\author{Florian Kunick}
\address{Max-Planck-Institut f\"ur Mathematik in den Naturwissenschaften}
\email{kunick@mis.mpg.de}
\keywords{partial differential equations, stochastic partial differential equations, rough paths theory, regularity theory}
\subjclass[2020]{60H15, 60L20, 35K10}
\date{\today}
\begin{document}

\title{A first order description of a nonlinear SPDE in the spirit of rough paths}
\maketitle
\begin{abstract}
    We consider a nonlinear stochastic partial differential equation (SPDE) in divergence form where the forcing term is a Gaussian noise, that is white in time and colored in space such that the gradient of the solution is H\"older-continuous, but not differentiable.
    Then, we prove a generalized Taylor expansion of the difference between the solution to the SPDE and the solution to its linearization around a fixed basepoint. The result is reminiscent of the theory of (controlled) rough paths and agrees with the general observation, that, in settings with a rough driver, subtracting the solution to the linearized equation yields a more regular object. 
\end{abstract}

\section{Introduction}
Let $A: \mathbb{R}^d \to \mathbb{R}^d$ be given. We consider the following stochastic partial differential equation (SPDE)
\begin{align} \label{maineq}
    \begin{cases}
	    \partial_t u = \nabla \cdot A(\nabla u) + \xi \\
	    u|_{t \leq 0} = 0
    \end{cases}
\end{align}
on $\R \times \R^d$. The forcing term $\xi$ is a space-time Gaussian noise, which is white in time, and periodic, colored and stationary in space. More precisely, we consider a Gaussian process $\xi$, formally defined via its covariance
\begin{align}
    \mathbb{E}\pra*{\xi(t, x) \xi(t', x')} = \delta(t-t')K(x-x'),
\end{align}
where $K: \R^d \to \R$ is periodic (cf. \cref{sec:she}), and that for convenience is localized in the time interval $\pra*{0,1}$. 
The spatial covariance function $K$ is chosen in such a way that the solution $v$ to the linearized equation, i.e. the stochastic heat equation (SHE)
\begin{align} \label{heat}
    \begin{cases}
        \partial_t v =  \Delta v + \xi \\
    	v|_{t \leq 0} = 0,
    \end{cases}
\end{align}
is not only a (continuous) function, but differentiable, and we have
\begin{align}\label{vhoelder}
	\pra*{\nabla v}_{\al} < \infty \ \mathrm{a.s.}
\end{align}
for some $\alpha \in \bra*{\frac{1}{2}, 1}$\footnote{This is in contrast to the SHE with space-time white noise forcing, where already in dimension 2 the solution is not a function anymore}. For the precise notation we refer to \cref{sec:not}. Since $v$ is a linear functional of the Gaussian field $\xi$ -- and thus is itself Gaussian -- more is true. Indeed, in \cite[Lemma 3]{OW19} Gaussian moments for $v$ were established, meaning that there exists a $C_0 > 0$ such that
\begin{align}\label{vgaussian}
    \mathbb{E}\pra*{\mathrm{exp}\bra*{\frac{1}{C_0} \pra*{\nabla v}^2_{\al}}} < \infty.
\end{align}
Moreover, we assume that the non-linearity $A$ is elliptic (cf. \eqref{ellA}). Under these assumptions, it is expected that we should also have
\begin{align}\label{uhoelder}
    \pra*{\nabla u}_{\al} < \infty \ \mathrm{a.s.}
\end{align}
and in fact \cite[Corollary 1]{OW19} yields the (a priori) estimate
\begin{align}\label{aprioriest}
    \pra*{\nabla u}_{\al} \lesssim \pra*{\nabla v}_{\al}^{\frac{\al}{\al_0}} + \pra*{\nabla v}_{\al}
\end{align}
for an $\al_0 \in \bra*{0, 1}$ as in \cite[Lemma 1]{OW19}\footnote{coming from the DeGiorgi--Nash theorem}. Here, we note that the implicit constant in $\lesssim$ only depends and will depend on the dimension, $\al$ and the ellipticity of $A$ (cf. \cref{sec:not}).

\medskip

In particular, we want to stress that for spatially colored noise, \eqref{maineq} is \emph{not} a singular SPDE.
In fact, the SPDE \eqref{maineq} with space-time white noise is, in general, not subcritical in any space-dimension, in the sense that by zooming in on small scales, the nonlinear terms blow up. For a reference concerning singular (semi-linear) SPDEs and the notion of subcriticality we refer to \cite{H14}. 

\medskip

We should mention, that there are no probabilistic arguments in this paper and if $\xi$ is a distribution of suitable regularity such that \eqref{vhoelder} holds, the same result can be obtained. Nevertheless, we have chosen to consider the case where $\xi$ is a (Gaussian) noise, on the one hand because we are heavily building on the work \cite{OW19}\footnote{where the analysis is also purely deterministic except for \eqref{vgaussian}} and on the other hand because a typical example of a distribution with a negative H\"older-regularity is a realization of some Gaussian field with a short correlation length. Thus, everything in this article can be seen as a pathwise analysis, which is in the spirit of the theory of regularity structures (cf. \cite{H14}), where there is a clear distinction between the deterministic and probabilistic steps. Usually, the probabilistic arguments involve the construction of singular products along with the corresponding renormalization. In our setting no renormalization is necessary and hence it appears to be natural that the arguments are purely deterministic.

\medskip

In \cite[p.70, Theorem 1]{OW19}, it is shown that, under suitable assumptions on $A$ -- that for convenience we recall in \cref{sec:not} -- a (unique) solution to \eqref{maineq} exists such that \eqref{aprioriest} and thus \eqref{uhoelder} hold. Due to the rough setting, the authors introduce a spatial increment operator $\delta_y$ (cf. \eqref{increment}) with which they linearize equation \eqref{maineq}\footnote{as opposed to taking the derivative of the equation due to the low regularity of the noise}. Then it is convenient (and a guiding principle) to subtract the increment $\delta_y v$ (where $v$ solves \eqref{sthe}) in order to get rid of the noise $\xi$.
Hence $\delta_y\bra*{u - v}$ satisfies a variable, but linear coefficient equation and the celebrated DeGiorgi--Nash theorem\footnote{a localized version thereof, to be precise} yields an a priori estimate for its H\"older-norm for some $\al_0 \in \bra*{0, 1}$. Postprocessing this estimate in turn establishes \eqref{uhoelder} for $\al = \al_0$. In the next step, this estimate is upgraded using standard $C^{1 + \al}$-Schauder theory as well as the stochastic estimate \eqref{vgaussian}, thus yielding \eqref{uhoelder} for any $\al \in (0,1)$. In general, \eqref{uhoelder} is sharp; in case of Brownian motion it is well-known that its paths have H\"older--regularity of at most $\frac{1}{2}$ but not better. We want to address the following question: Is there a way to give a finer description of the regularity of $u$? 

\medskip

In this work we intend to give such a regularity statement in the following way. 
It is often the case that the difference of the solution to a non-linear equation with a rough driving signal and the solution to the linearized equation is more regular. A prominent example that has been treated in the recent years is the $\phi^4_2$-model (cf. \cite{DPD03}). Indeed, its solution $\varphi$ and the solution $f$ to the stochastic heat equation in dimension $2$ are distributions but their difference $\varphi - f$ is smoother, in particular a function. This has been exploited in \cite{DPD03} in order to construct a solution which is local in time. In fact, this is also a guiding principle for the theory of (controlled) rough paths (cf. \cite{G04}) and ultimately in the theory of regularity structures (cf. \cite{H14}), where this concept has been vastly generalized. 

\medskip

Our main result is that $\nabla u$ is \emph{modelled} after $\nabla v$ which essentially means that a certain \emph{a priori}-estimate holds (cf. \eqref{modconstant}). In the context of singular SPDEs such a modelling assumption is used in the following way. First, one constructs a singular product  on the level of the linear, but irregular model, i.e. $\nabla v$ in this case. In a second step, one constructs the singular product on the level of the solution $\nabla u$ using the singular product on the level of the model and the assumption that $\nabla u$ is modelled after $\nabla v$.
Due to the nonlinear nature of the problem it is not sufficient to just consider the solution $v$ to the SHE, but for all space-time points $z = (t', x')$ we consider the solution to an anisotropic SHE.

\medskip

Let $a(t', x') := DA(\nabla u(t', x')) \in \R^{d \times d}$. Then we write $v_{a(t', x')}$ for the solution to the anisotropic stochastic heat equation, i.e. $v_{a(t', x')}$ solves
\begin{align}\label{ashe}
    \begin{cases}
        \partial_t v_{a(t', x')} = \nabla \cdot a(t', x') \nabla v_{a(t', x')} + \xi \\
        v_{a(t', x')}|_{t \leq 0} = 0
    \end{cases}
\end{align}
with $\xi$ as in \eqref{heat}. By the $C^{1+ \al}$-Schauder theory developed in \cite{OW19} we get the following uniform\footnote{in the basepoint} estimate.
\begin{lem}\label{Lem1}
    There exists a solution $v_{a(t', x')}$ to \eqref{ashe} where $a(t', x') = DA(\nabla u(t', x'))$ and $A$ satisfies \eqref{ellA}, \eqref{boundedA} as well as \eqref{Lip}, and we have 
    \begin{align}
        \sup_{\bra*{t', x'} \in \R \times \R^d} \pra*{\nabla v_{a(t', x')}}_{\al} \lesssim \pra*{\nabla v}_{\al}^{\frac{\al}{\al_0}} + \pra*{\nabla v}_{\al}.
    \end{align}
\end{lem}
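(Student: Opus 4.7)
The plan is to view \eqref{ashe}, for each fixed basepoint $(t', x')$, as a special instance of \eqref{maineq} in which the nonlinearity is the linear map $\tilde A_{t', x'}(p) := a(t', x') p$, and then to apply the existence and a priori results of \cite{OW19} to $\tilde A_{t', x'}$ with constants that are uniform in the basepoint.

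First I would verify that $\tilde A_{t', x'}$ satisfies the same structural hypotheses as $A$, with constants independent of $(t', x')$. The ellipticity \eqref{ellA} and the boundedness \eqref{boundedA} of $DA$ yield constants $0 < \lambda \leq \Lambda < \infty$ depending only on $A$ such that
\begin{align*}
    \lambda \abs*{\eta}^2 \leq \eta \cdot a(t', x') \eta \quad \text{and} \quad \abs*{a(t', x')} \leq \Lambda \qquad \text{for every } \eta \in \R^d \text{ and every } (t', x').
\end{align*}
Since $\tilde A_{t', x'}$ is linear, $D\tilde A_{t', x'} = a(t', x')$ is constant in $p$, so the Lipschitz hypothesis \eqref{Lip} on $D\tilde A_{t', x'}$ is automatic with vanishing Lipschitz constant. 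This places $\tilde A_{t', x'}$ squarely within the scope of \cite[Theorem 1]{OW19} and \cite[Corollary 1]{OW19}, and in particular yields the existence of the solution $v_{a(t', x')}$ to \eqref{ashe}.

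Applying next the a priori bound \eqref{aprioriest} with $v_{a(t', x')}$ in place of $u$ and $\tilde A_{t', x'}$ in place of $A$ gives
\begin{align*}
    \pra*{\nabla v_{a(t', x')}}_{\al} \lesssim \pra*{\nabla v}_{\al}^{\al/\al_0} + \pra*{\nabla v}_{\al}.
\end{align*}
The implicit constant is uniform in $(t', x')$ because the relevant parameters — the dimension, $\al$, and the ellipticity constants $\lambda, \Lambda$ — are all uniform in the basepoint. Taking the supremum over $(t', x') \in \R \times \R^d$ then yields the claim.

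The main obstacle is confirming that the constants in \cite[Theorem 1, Corollary 1]{OW19} depend on the nonlinearity \emph{only} through its ellipticity bounds, the dimension, and $\al$. This is the expected state of affairs because the underlying arguments rest on the DeGiorgi--Nash theorem and $C^{1+\al}$-Schauder theory, whose constants have exactly this dependence. Furthermore, the linear case $\tilde A_{t', x'}(p) = a(t', x') p$ is structurally simpler than the general nonlinear case treated in \cite{OW19}: since $D\tilde A_{t', x'}$ is constant, no bootstrap in the Lipschitz seminorm of $D\tilde A_{t', x'}$ is required, and the Schauder step reduces to a constant-coefficient estimate.
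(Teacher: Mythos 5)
Your proposal is correct and follows essentially the same route as the paper: the paper likewise applies Theorem 1 and Corollary 1 of \cite{OW19} to the linear nonlinearity $p \mapsto a(t', x')p$, noting (in a footnote) that even though this nonlinearity is random, the analysis is pathwise and the constants depend on it only through the deterministic ellipticity bounds $\lambda, \Lambda$, which gives uniformity in the basepoint exactly as you argue.
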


\medskip

Then we can state our main result.
\begin{thm}\label{Thm1}
Let $u$ be a solution to the equation
\begin{align}
    \begin{cases}
        \partial_t u = \nabla \cdot A(\nabla u) + \xi \\
        u|_{t \leq 0} = 0
    \end{cases}
\end{align}
on $\R \times \R^d$ where $A$ satisfies \eqref{ellA}, \eqref{boundedA} and \eqref{Lip}, and where $\xi$ is a Gaussian noise that is white in time, periodic, stationary and colored in space such that the corresponding solution $v$ to the stochastic heat equation \eqref{heat} satisfies $[\nabla v]_{\al} < \infty$ for some $\al \in (\frac{1}{2}, 1)$. Moreover, let $a(t', x') = DA(\nabla u(t', x')) \in \R^{d \times d}$ and let $v_{a(t', x')}$ be a solution to \eqref{ashe}.
Then there exists a family of symmetric matrices $(B(t', x'))_{(t', x')}$ such that for all $x, x' \in \mathbb{R}^d$ and $t, t' \in \R$ it holds a.s. that
\begin{align}\label{est1}
    \abs*{\nabla u (t, x) - \nabla u(t', x') - \bra*{\nabla v_{a(t', x')}(t, x) - \nabla v_{a(t', x')}(t', x')} - B(t', x')(x-x')} \\
    \lesssim d^{2\al}((t,x), (t', x'))
\end{align}
and $\lesssim$ denotes $\leq C$ where $C$ depends only on $\pra*{\nabla v}_{\al}$, $\lambda$ as well as $\Lambda$, $\al$ and $d$.
\end{thm}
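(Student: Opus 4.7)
The plan is to mimic the freezing-of-coefficients trick, subtract the linearized object at the basepoint, and then extract a second-order Taylor expansion from the resulting linear equation.

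First I would fix a basepoint $(t',x')$, abbreviate $a := a(t',x') = DA(\nabla u(t',x'))$, and consider the difference $w(t,x) := u(t,x) - v_{a}(t,x)$. Because both $u$ and $v_a$ are driven by the \emph{same} noise $\xi$, the noise cancels and a short computation gives
\begin{align*}
    \partial_t w = \nabla \cdot \bra*{a \nabla w} + \nabla \cdot g,
\end{align*}
where $g(t,x) := A(\nabla u(t,x)) - A(\nabla u(t',x')) - a\bra*{\nabla u(t,x) - \nabla u(t',x')}$; the constant term $A(\nabla u(t',x')) - a\nabla u(t',x')$ drops out under the divergence. Since $a = DA(\nabla u(t',x'))$, a first-order Taylor expansion of $A$ together with the Lipschitz assumption \eqref{Lip} on $DA$ yields
\begin{align*}
    |g(t,x)| \lesssim |\nabla u(t,x) - \nabla u(t',x')|^2 \lesssim \pra*{\nabla u}_\al^2 \, d^{2\al}((t,x),(t',x')),
\end{align*}
where the last bound uses \eqref{uhoelder}, which in turn is controlled via \eqref{aprioriest} by a function of $\pra*{\nabla v}_\al$.

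Next I would exploit this \emph{pointwise} quadratic vanishing of the source at the basepoint to upgrade $w$ to a second-order Taylor expansion at $(t',x')$ in the spirit of Campanato. Writing $Q_r$ for the parabolic cylinder of radius $r$ around $(t',x')$, I would split $w = w_1 + w_2$ on $Q_r$, where $w_1$ solves the homogeneous constant-coefficient equation $\partial_t w_1 = \nabla\cdot(a\nabla w_1)$ with $w_1 = w$ on the parabolic boundary, and $w_2$ carries the inhomogeneity $\nabla\cdot g$. Smoothness of solutions to constant-coefficient parabolic equations gives a quadratic Taylor expansion for $w_1$ (hence a linear one for $\nabla w_1$) with an error controlled by $r^{2\alpha}$ times an oscillation of $\nabla w$ on $Q_r$; the source contribution $w_2$ is absorbed by energy/$L^\infty$ estimates using $|g|\lesssim r^{2\al}$ on $Q_r$. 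Iterating this one-step improvement on a geometric sequence of radii produces a symmetric matrix $B(t',x')$, defined as the limit of the best affine approximation of $\nabla w$ on shrinking cylinders, together with the bound
\begin{align*}
    |\nabla w(t,x) - \nabla w(t',x') - B(t',x')(x-x')| \lesssim d^{2\al}((t,x),(t',x')).
\end{align*}
Symmetry of $B(t',x')$ is forced by the fact that only the symmetric part of a linear map in $x$ is seen by $\nabla w$ modulo the Taylor error.

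Finally, using \cref{Lem1} together with the \emph{a priori} bound \eqref{aprioriest} allows one to make the implicit constant at each basepoint uniform, depending only on $\pra*{\nabla v}_\al$, $\lambda$, $\Lambda$, $\al$ and $d$. The main obstacle I foresee is step two, namely running the Campanato iteration in a setting where $\nabla w$ is merely $\al$-H\"older: one has to check that the first-order Schauder theory of \cite{OW19} applied to the constant-coefficient operator $\partial_t - \nabla\cdot(a\nabla)$ is strong enough to convert the one-point quadratic bound on $g$ into a one-point \emph{first-order} Taylor expansion for $\nabla w$ with the sharp exponent $2\al$, and that the resulting matrix $B(t',x')$ is genuinely well-defined (independent of the approximating sequence) and symmetric.
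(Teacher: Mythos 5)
Your proposal is correct in outline, but it takes a genuinely different route from the paper. You work directly with $w=u-v_{a(t',x')}$, observe that the divergence-form source $g=A(\nabla u)-A(\nabla u(t',x'))-a(t',x')\bra*{\nabla u-\nabla u(t',x')}$ vanishes quadratically at the basepoint by \eqref{Lip} and \eqref{uhoelder}/\eqref{aprioriest}, and then run a pointwise Campanato (excess-decay) iteration against solutions of the homogeneous constant-coefficient equation, defining $B(t',x')$ as the limit of best affine approximations on shrinking cylinders. The paper never solves comparison boundary-value problems: it works with the spatial increments $\delta_y w$, which satisfy the constant-coefficient equation \eqref{constcoeffeqw} with source $g=(a_y-a(t',x'))\nabla\delta_y u$ whose H\"older norm is small near the basepoint (the increment-level analogue of your quadratic vanishing, cf.\ \eqref{estv}); it applies the interior $C^{1+\al}$ Schauder estimate \eqref{Nash} off the shelf, reconstructs the affine approximation of $\nabla w$ from increment data via \cref{Prop1} and \cref{cor1}, and closes with a buckling argument on the modelling constant $M_z$, which forces an extra qualitative step (finiteness of $M_z$ via smooth approximation and lower semicontinuity). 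Your route buys a direct estimate with no absorption or a priori finiteness issue; the paper's route buys the reuse of the existing increment-based Schauder machinery of \cite{OW19} and avoids comparison problems and $L^2$-to-$L^\infty$ bookkeeping. To make your plan complete you would still need to carry out: the one-step improvement honestly (energy estimate for $w-w_1$ using $\norm*{g}_{P_r(z)}\lesssim r^{2\al}$, interior estimates for $w_1$ after subtracting a quadratic corrector in $x$ and a linear corrector in $t$, and the upgrade from averaged excess decay to the sup-norm statement, using continuity of $\nabla w$ and uniformity in the basepoint); the convergence of the linear parts along dyadic radii (which works since $2\al-1>0$, exactly as in \eqref{ldep}); and the symmetry of $B$, which your one-line remark does not yet prove -- since $2\al>1$ the matrix $B$ is unique, and because $\nabla w(t',\cdot)$ is a gradient, circulations around small squares force the antisymmetric part of $B$ to vanish (the paper instead gets symmetry from $a_{ij}=\partial_{ij}f(x')$ for smooth approximations in \cref{Prop1}). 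None of these points is a fatal obstacle, so the strategy is sound, though as written it is a plan rather than a proof.
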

One way to think of \eqref{est1} is as a generalized Taylor expansion. Indeed, rewriting \eqref{est1} slightly as
\begin{align}\label{est2}
    \abs*{(\nabla u (t, x) - \nabla v_{a(t', x')}(t, x)) - (\nabla u(t', x') - \nabla v_{a(t', x')}(t', x')) - B(t', x')(x - x')} \\
    \lesssim d^{2\al}((t,x), (t', x')).
\end{align}
the symmetric matrix $B(t', x')$ plays the role of the Hessian of the function $u(t', \cdot) - v_{a(t', x')}(t', \cdot)$ at the basepoint $(t', x')$. Since $2\al > 1$ it is natural, that an affine correction appears. Moreover, since $2\al > 1$, the matrix $B$ can be seen to be unique and does not depend on $(t, x)$, or more precisely it does not depend on $d\bra*{(t, x), (t', x')}$.
Note that \eqref{est2} is essentially the modelledness condition\footnote{on the level of the solution and not the gradient for $\sigma \equiv 1$} of \cite[Definition 3.1, p.880]{OW19+}, which is an extension of controlled rough paths (cf. \cite{G04}). In \cite{OW19+}, they construct singular products on the level of the stochastic heat equation using probabilistic arguments. They then use the modelledness condition \cite[Definition 3.1, p.880]{OW19+} to lift these singular products to the non-linear setting. Since \eqref{maineq} is not singular, the regularity theory in \cite{OW19} does not rely on such a modelledness. Nevertheless \cref{Thm1} states that such a modelledness holds true. Of course, \eqref{est1} also implies that $\nabla u(t', \cdot) - \nabla v_{a(t', x')}(t', \cdot)$ is differentiable in $x'$ as well as the improved H\"older-regularity in time $\abs*{\nabla u(t, x') - \nabla v_{a(t', x')}(t, x') - \bra*{\nabla u(t', x') - \nabla v_{a(t', x')}(t', x')}} \lesssim \abs*{t-t'}^{\al}$ at $z = \bra*{t', x'}$.

\medskip

From now on, we set $w_{a(t', x')} := u - v_{a(t', x')}$. Then we define the \emph{modelling constant}
\begin{align}
  M :=  \sup_{z = (t', x')} \inf_B \sup_{r > 0} r^{-2\al} \norm*{\nabla w_{a(t', x')} - B_{x'}}_{P_r(z)},
\end{align}
where the infimum ranges over all affine functions $B_{x'}(x) := B(x - x') + b$ with $B \in \R^{d \times d}$ being symmetric and $b \in \R^d$, and $P_r(z)$ is the parabolic cylinder of radius $r$ defined in \cref{sec:not}. In order to prove \cref{Thm1} we show that we have
\begin{align}\label{modconstant}
    M \leq C(d, \lambda, \Lambda, \al, \pra*{\nabla v}_{\al}).
\end{align}
Then it is straightforward to see that the optimal choice is $b = \nabla w_{a(t', x')}(t', x')$ and hence \eqref{modconstant} yields \cref{Thm1}.
Note that the definition of $M$ is essentially the same as the definition of the modelling constant in \cite[Definition 3.1]{OW19+}. Moreover, we want to make the following connection. Apart from the dependence of $w_{a(t', x')}$ on the basepoint $z = (t', x')$, the semi-norm $M$ bears close resemblance to the H\"older-norm defined in \cite[Section 3.3]{K96} where H\"older-regularity is measured in terms of how well a function is approximated by polynomials.

\section{Notation and assumptions}\label{sec:not}
Our notation and assumptions are the same as in \cite[Section 2]{OW19}. For the convenience of the reader we will recall them here.
The non-linearity $A: \R^d \to \R^d$ is assumed to be continuously differentiable and uniformly elliptic in the sense that there exists $\lambda > 0$ such that
\begin{align}\label{ellA}
	\eta \cdot DA(x) \eta \geq \lambda |\eta|^2 \  \text{for all} \ x, \eta \in \mathbb{R}^d
\end{align}
and we have
\begin{align}\label{boundedA}
    |DA(x)\eta| \leq |\eta| \ \text{for all} \ x, \eta \in \mathbb{R}^d
\end{align}
where by $DA$ we denote the Jacobian of $A$. Moreover, we assume that there exists $\Lambda > 0$ such that
\begin{align}\label{Lip}
	|DA(x) - DA(y)| \leq \Lambda |x - y| \ \text{for all} \ x, y \in \R^d.
\end{align}
For $\al \in (0, 1)$ the semi-norm $\pra*{\cdot}_{\al}$ is defined as 
\begin{align} \label{norm}
    [f]_{\al} := \sup_{z \neq z' \in \R \times \R^d} \frac{|f(z) - f(z')|}{d^{\al}(z, z')} < \infty
\end{align}
for all space-time functions $f: \R \times \R^d \to \R$ or vector fields $f: \R \times \R^d \to \R^d$
where
\begin{align}
    d((t, x), (t', x')) := |t-t'|^{\frac{1}{2}} + |x - x'|
\end{align}
denotes the \emph{Carnot--Caratheodory} metric and by abuse of notation $\abs{\cdot}$ refers either to the absolute value or the Euclidean norm on $\R^d$. Naturally, the space $C^{\al}$ denotes all functions $f$ such that
$\pra*{f}_{\al} < \infty$. For $r > 0$ and $z = (t', x')$, by 
\begin{align}
    P_r(z) := (t' - r^2, t') \times B_r(x')
\end{align}
we denote the parabolic cylinder centered around $z$ with radius $r$. Then  $\norm*{\cdot}_{P_r(z)}$ is the supremum norm on $P_r(z)$. We will also frequently write $[f]_{\al, P_r(z)}$ which is the same semi-norm as in \eqref{norm} restricted to $P_r(z)$.

\medskip

For $y \in \mathbb{R}^d$ we define the spatial increment operator $\delta_y$ as
\begin{align}\label{increment}
    \delta_y f(t, x) := f(t, x+y) - f(t, x)
\end{align}
where $f$ is either a scalar or a vector field. Then, by the mean value theorem, we can linearize our non-linearity according to
\begin{align}
	\delta_y A(\nabla u) = a_y \nabla \delta_y u
\end{align}
where
\begin{align}
    a_y(t, x) = \int_0^1 DA(\theta \nabla u(t, x + y) + (1-\theta)\nabla u(t,x)) \dx\theta \in \R^{d \times d}.
\end{align}

\medskip

The assumptions on the Jacobian $DA$ translate to estimates on $a_y$ as follows.
We have
\begin{align}
	\eta \cdot a_y(t,x) \eta \geq \lambda |\eta|^2 \ \text{for all} \ x, \eta \in \mathbb{R}^d, t \in \R
\end{align}
as well as
\begin{align}
    |a_y(t, x)\eta| \leq |\eta| \ \text{for all} \ x, \eta \in \mathbb{R}^d, t \in \R
\end{align}
and
\begin{align}\label{Hoe}
	[a_y]_{\al} \leq [\nabla u]_\al.
\end{align}

\medskip

Let $\psi$ be a smooth, positive and radially symmetric mollifier that satisfies $\supp \psi \subset B_1(0)$ as well as $\int_{\R^d} \psi \dx{x} = 1$. The radial symmetry assumption also ensures that first moments vanish, i.e. for all $i = 1, \dots, d$ it holds
$\int_{\R^d} \psi(x) x_i \dx{x} = 0$. Then we write $\psi_r(x) := \frac{1}{r^d} \psi\bra*{\frac{x}{r}}$
and we define for any function $f$
\begin{align}
    f_r := f_{r, 0} := f * \psi_r
\end{align}
as well as for $i = 1, \dots, d$
\begin{align}
    f_{r, i} := f * \bra*{\partial_i \psi}_r.
\end{align}
For vector fields\footnote{such as a gradient} this notation is to be understood entrywise.
Moreover, we will always write $B_{x'}$ for the affine function
\begin{align}
    B_{x'}(x) := B(x - x') + b
\end{align}
where $B \in \R^{d \times d}$ and $b \in \R^d$. 

\section{The stochastic heat equation}\label{sec:she}
In the work \cite{OW19}, they consider a stationary, spatially periodic Gaussian noise $\xi$ on $\R \times \mathbb{R}^d$, localized in the time interval $\pra*{0, 1}$, that is white in time and colored in space, and such that the corresponding stochastic heat equation
\begin{align}\label{sthe}
    \begin{cases}
        \partial_t v = \Delta v + \xi \\
         v|_{t\leq0} = 0
    \end{cases}
\end{align}
satisfies 
\begin{align}\label{regv}
    \pra*{\nabla v}_{\al} < \infty \ \mathrm{a.s.}
\end{align}
for some $\al \in (\frac{1}{2}, 1)$.
More specifically, they consider a Gaussian field $\xi = \bra*{\xi(f)}_{f}$  with $1-$periodic, positive definite covariance function $K: \R^d \to \R$ such that for smooth test-functions $f, g$
\begin{align}
    \mathbb{E}[\xi(f)\xi(g)] = \int_0^1 \int_{\pra*{0, 1}^d}\int_{\pra*{0, 1}^d} f(t, x) K(x - y) g(t, y) \dx{x} \dx{y} \dx{t}
\end{align}    
and $\xi(f)$ is normally distributed with mean zero.
Such a Gaussian field is easily seen to exist by Kolmogorov's consistency theorem.
The fact that $K$ only depends on one variable yields stationarity of $\xi$. Periodicity of $K$ translates to periodicity of $\xi$. Positive definiteness of $K$ implies that its Fourier transform $\hat{K}$ is real-valued and non-negative and requiring that $K$ is symmetric, i.e. $K(-x) = K(x)$, yields $\hat{K}(-k) = \hat{K}(k)$.
Most importantly, requiring that the Fourier transform of $K$ satisfies 
\begin{align} \label{condK}
    \hat{K}(k) \lesssim (1 + |k|^2)^{-\frac{s}{2}}
\end{align}
for all $k \in (2\pi\mathbb{Z})^d$, where $s = 2\al + d \in (d, d+2)$ finally implies \eqref{regv} (cf. \cite[Lemma 3]{OW19}). 

\medskip

Now we give the proof for \cref{Lem1}.
\begin{proof}[Proof of \cref{Lem1}]
    For any $(t', x') \in \R \times \R^d$ we apply Theorem 1 of \cite{OW19} to $A$ given by
    \begin{align}
        A(\nabla u) = a(t', x')\nabla u
    \end{align}
    where $u = v_{a(t', x')}$\footnote{In this case $A$ is non-deterministic, but since the analysis is pathwise and all the constants depend on $A$ only through the ellipticity constants $\lambda, \Lambda$, which are deterministic, the proofs are unaffected.}. Thus we get a unique solution $v_{a(t', x')}$ that satisfies $\pra*{\nabla v_{a(t', x')}}_{\al} < \infty$ a.s.. Then we can apply Corollary 1 of \cite{OW19} which yields uniformly in $(t', x')$ the estimate
    \begin{align}
        \pra*{\nabla v_{a(t', x')}}_{\al} \leq C(d, \lambda, \Lambda, \al) (\pra*{\nabla v}_{\al} + \pra*{\nabla v}_{\al}^{\frac{\al}{\al_0}})
    \end{align}
    and hence the conclusion.
\end{proof}
\section{Deterministic estimates}
From now on we fix a space-time point $z = (t', x')$. We set $w_{a(t', x')} := u - v_{a(t', x')}$, where we recall that $a(t', x')= DA(\nabla u(t', x'))$. For notational convenience we will drop the subscripts referring to $a(t', x')$. 

\medskip

In the first step, for simplicity, we focus on the spatial part of the modelling constant. The proof is elementary and essentially an extension of the proof of Lemma 1 in \cite{OW19}.
\begin{prop}\label{Prop1}
    Let $x' \in \R^d$. For $f: \mathbb{R}^d \to \R$ such that $\nabla f \in C^{\al}\footnote{only in space in this case}$, we have
    \begin{align}\label{intest}
         \sup_{r > 0} \frac{1}{r^{2\al}}\inf_{B}  \norm*{\nabla f - B_{x'}}_{B_r(x')} &\lesssim \sup_{l > 0} \frac{1}{l^{2\al}} \sup_{\abs*{y} \leq l} \inf_{k \in \R^d} \norm*{\nabla \delta_y f - k}_{B_l(x')} =: N
\end{align}
where $B_{x'}(x) = B(x-x') + b$, $B \in \R^{d \times d}$ symmetric, $ b \in \R^d$.
\end{prop}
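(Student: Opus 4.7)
The strategy is to reduce the claim to a quantitative Hyers--Ulam stability argument. Write $g := \nabla f$ and $F(\eta) := g(x' + \eta) - g(x')$, so $F(0) = 0$, and let $k(y, l) \in \R^d$ denote the optimal constant in the hypothesis $\norm*{\nabla \delta_y f - k(y, l)}_{B_l(x')} \leq N l^{2\al}$. Evaluating this estimate at $x = x'$ identifies $k(y, l) = F(y) + O(N l^{2\al})$; substituting back in and setting $x = x' + \eta_1$ (with $|\eta_1| \leq l$) and $y = \eta_2$ (with $|\eta_2| \leq l$) yields the approximate Cauchy equation
\[
    \abs*{F(\eta_1 + \eta_2) - F(\eta_1) - F(\eta_2)} \leq 2 N l^{2\al} \qquad \text{for all } |\eta_1|, |\eta_2| \leq l.
\]

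A classical Hyers--Ulam telescoping then extracts the linear approximation. Applying the Cauchy estimate with $\eta_1 = \eta_2 = \eta/2^{k+1}$ at scale $l = |\eta|/2^{k+1}$ gives $|F(\eta/2^k) - 2 F(\eta/2^{k+1})| \lesssim N (|\eta|/2^{k+1})^{2\al}$. Multiplying by $2^k$ and summing, the geometric series $\sum_{k \geq 0} 2^{k(1 - 2\al)}$ converges precisely because $2\al > 1$, so $L_k(\eta) := 2^k F(\eta/2^k)$ is Cauchy in $k$ and converges to some $L(\eta)$ with $|F(\eta) - L(\eta)| \lesssim N |\eta|^{2\al}$. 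Passing the approximate additivity through the limit and combining with the continuity of $F$ (inherited from $\nabla f \in C^\al$) yields $\R$-linearity of $L$; hence $L(\eta) = B\eta$ for some matrix $B \in \R^{d \times d}$.

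The remaining point -- and the step I expect to be the main obstacle -- is forcing $B$ to be symmetric, which genuinely requires the gradient structure $g = \nabla f$ rather than just the scalar hypothesis. Since $g$ is conservative, for any $s > 0$ and any two unit vectors $e_1, e_2 \in \R^d$ the line integral of $g$ around the rectangle with corners $x'$, $x' + s e_1$, $x' + s (e_1 + e_2)$, $x' + s e_2$ vanishes. Substituting $g(x) = g(x') + B(x - x') + E(x)$ with $|E(x)| \lesssim N |x - x'|^{2\al}$ (from the preceding step), the constant piece cancels by closedness of the loop, the linear piece contributes exactly $s^2\, e_2 \cdot (B - B^T) e_1$, and the total error is $O(N s^{2\al + 1})$. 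Dividing by $s^2$ and sending $s \to 0^+$ (again using $2\al - 1 > 0$) forces $e_2 \cdot (B - B^T) e_1 = 0$ for all unit $e_1, e_2$, so $B$ is symmetric.

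Finally, taking $b := g(x')$ and the symmetric $B$ just constructed, for every $x \in B_r(x')$ we have $|g(x) - b - B(x - x')| = |F(x - x') - B(x - x')| \lesssim N |x - x'|^{2\al} \leq N r^{2\al}$, which gives $\inf_B \norm*{\nabla f - B_{x'}}_{B_r(x')} \lesssim N r^{2\al}$ for each $r > 0$, i.e., the claim. The Hyers--Ulam step is classical; the real difficulty is the symmetry argument, since the scalar hypothesis only controls first-order increments and produces an a priori asymmetric $B$, so the closed-loop integral is the cleanest route from ``$g$ is conservative'' to an infinitesimal symmetry constraint on $B$.
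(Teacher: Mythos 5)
Your proof is correct, but it takes a genuinely different route from the paper's. The paper first assumes $f$ smooth, tracks the near-optimal constants $k_i(y,l)$ along dyadic scales (the same geometric series in $2\alpha-1>0$ that powers your telescoping), identifies the limiting matrix with the Hessian $D^2 f(x')$ --- whence symmetry is automatic --- assembles the full affine approximation by a coordinate-wise telescoping over the directions $e_1,\dots,e_d$, and finally removes the smoothness assumption by mollification combined with lower semicontinuity of the left-hand side. You instead work directly at the $C^{1,\alpha}$ level: evaluating the hypothesis at $x=x'$ identifies $k(y,l)$ with the increment $F(y)=\nabla f(x'+y)-\nabla f(x')$ up to $O(Nl^{2\alpha})$, the approximate Cauchy equation plus the classical Hyers--Ulam telescoping produces a linear map $L(\eta)=B\eta$ with $|F(\eta)-L(\eta)|\lesssim N|\eta|^{2\alpha}$, and symmetry of $B$ is recovered from the vanishing circulation of the conservative field $\nabla f$ around small parallelograms, after dividing by $s^2$ and using $2\alpha>1$ once more. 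Your route dispenses with the mollification/lower-semicontinuity step and the coordinate-wise assembly, and it isolates precisely where the gradient structure enters (only in the symmetry of $B$); the paper's route buys the explicit identification of $B$ as the Hessian of the smooth approximant and an approximation argument that is in any case reused verbatim at the end of the proof of the main theorem. Two routine points you should make explicit: the sup-norm is over the open ball, so take $|\eta_1|<l$ (or pass to the closed ball by continuity of $\nabla f$); and the linearity of $L$ requires the standard remark that an additive map at distance $O(N|\eta|^{2\alpha})$ from the continuous $F$ is locally bounded, hence $\mathbb{R}$-linear.
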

\begin{proof}
First of all, we assume that $f$ is smooth.
We denote by $\set*{e_i}_{i = 1, \dots, d}$ the standard orthonormal basis of $\R^d$.
Let $k = k(y, l)$ be the (near) optimal constant for $N$. Fix $i , j = 1, \dots, d$ and $l > 0$. Note that for all $y_1, y_2$ we have the identity $\delta_{y_1 + y_2} \partial_i f = \delta_{y_2}\partial_i f(\cdot + y_1) + \delta_{y_1}\partial_i f$.

Then we estimate
\begin{align}
    \abs*{k_i(2le_j, 2l) - 2 k_i(l e_j, 2l)} &\leq \norm*{k_i(2le_j, 2l) - \delta_{2le_j}\partial_i f}_{B_{2l}(x')} + \norm*{k_i(l e_j, 2l) - \delta_{l e_j} \partial_i f}_{B_{2l}(x')} \\
    &+ \norm*{k_i(l e_j, 2l) - \delta_{l e_j} \partial_i f}_{B_{l}(x')} \\
    &\leq 3(2l)^{2\al} N
\end{align}
and similarly
\begin{align}
    \abs*{k_i(l e_j, 2l) - k_i(l e_j, l)} &\leq \norm*{k_i(l e_j, 2l) - \delta_{l e_j}\partial_i f}_{B_{2l}(x')} + \norm*{k_i(l e_j, l) - \delta_{l e_j}\partial_i f}_{B_{l}(x')} \\
    &\leq 2(2l)^{2\al} N.
\end{align}
Combining these two estimates yields via the triangle inequality
\begin{align}
    \abs*{k_i(2le_j, 2l) - 2 k_i(l e_j, l)} \lesssim N l^{2\al}.
\end{align}
Hence for any $l > 0$ and $n \in \N$ we get 
\begin{align}
    \abs*{\frac{k_i(\frac{l}{2^n} e_j, \frac{l}{2^n})}{\frac{l}{2^n}} - \frac{k_i(\frac{l}{2^{n+1}} e_j, \frac{l}{2^{n+1}})}{\frac{l}{2^{n+1}}}} \lesssim N l^{2\al - 1} \bra*{2^{-n}}^{2\al -1}.
\end{align}
Using our assumption that $2\al - 1 > 0$ we see that the corresponding sequence is Cauchy and thus there exists $a_{ij}(l) \in \R$ such that
\begin{align}\label{limit}
    \frac{k_i(\frac{l}{2^n} e_j, \frac{l}{2^n})}{\frac{l}{2^n}} \to a_{ij}(l) \ \mathrm{for} \ n \to \infty
\end{align}
and, by dyadic summation\footnote{again using the fact that $2\al - 1 > 0$}, this yields
\begin{align}\label{ldep}
    \abs*{\frac{k_i(l e_j, l)}{l} - a_{ij}(l)} \lesssim N l^{2\al - 1}.
\end{align}
Note that $a_{ij}(l)$ is constant on dyadics, i.e.
\begin{align}\label{invdy}
    a_{ij}(l) = a_{ij}(2^{-m}l)
\end{align}
for all $m \in \N$. Feeding the estimate \eqref{ldep} into $N$ we consequently have
\begin{align}
    \norm*{\frac{1}{l} \delta_{le_j} \partial_i f - a_{ij}(l)}_{B_l(x')} \lesssim N l^{2\al - 1}
\end{align}
and, since $f$ is smooth, we infer that
\begin{align}
    a_{ij}(l) \to \partial_{ij}f(x') \ \mathrm{for} \ l \to 0.
\end{align}
Hence we conclude by \eqref{invdy} that $a_{ij}$ is constant and we have $a_{ij} = \partial_{ij}f(x')$. Moreover, the estimate
\begin{align}\label{impest}
    \norm*{\frac{1}{l} \delta_{le_j} \partial_i f - \partial_{ij}f(x')}_{B_l(x')} \lesssim N l^{2\al - 1}
\end{align}
holds.
Now let $x \in B_r(x')$ and we set $y := x - x'$. Then we estimate componentwise using \eqref{impest}
\begin{align}\label{smoothest}
    &\abs*{\partial_i f(x) - \partial_i f(x') -  \nabla \partial_i f(x') \cdot \bra*{x - x'}} \\
    &\leq \sum_{k = 1}^d \abs*{\delta_{y_k e_k} \partial_i f(x'_1, \dots, x'_{k+1} + y_{k+1}, \dots, x'_d + y_d) - \partial_{ki} f(x') y_k} \\
    &\lesssim N \sum_{k=1}^d \abs*{y_k}^{2\al} \lesssim N r^{2\al}.
\end{align}
Now we drop the assumption that $f$ is smooth. To this end, let $\eps > 0$. Let $k$ be the (near) optimal constant for $f$ in $N$. Then we have for all $x \in B_l(x')$ and $\abs*{y} \leq l$ by the triangle inequality
\begin{align}
    \abs*{\delta_y f_{\eps}(x) - k} \leq \norm*{\delta_y f - k}_{B_{2l}(x')} 
\end{align}
and hence
\begin{align}\label{bound}
    \sup_{l > 0} \frac{1}{l^{2\al}} \sup_{\abs*{y} \leq l} \inf_{k^{\eps}} \norm*{\delta_y f_{\eps} - k^{\eps}}_{B_{l}(x')} &\leq \sup_{l > 0} \frac{1}{l^{2\al}} \sup_{\abs*{y} \leq l} \norm*{\delta_y f_{\eps} - k}_{B_{l}(x')} \\
    &\leq \sup_{l > 0} \frac{1}{l^{2\al}} \sup_{\abs*{y} \leq l} \norm*{\delta_y f - k}_{B_{l}(x')}.
\end{align}
Note that, since $\partial_i f$ is H\"older-continuous, $\partial_i f_{\eps}$ converges uniformly to $\partial_i f$ and thus, for fixed $r$, it holds that $\inf_{B^{\eps}} \norm*{\nabla f_{\eps} - B^{\eps}_{x'}}_{B_r(x')}$ converges to $\inf_{B} \norm*{\nabla f - B_{x'}}_{B_r(x')}$. Dividing by $r^{2\al}$ and taking the supremum over $r$ yields lower semicontinuity of the seminorm
\begin{align}\label{lsc}
    \sup_{r > 0} \frac{1}{r^{2\al}} \inf_{B} \norm*{\nabla f - B_{x'}}_{B_r(x')} &\leq \liminf_{\eps \to 0} \sup_{r > 0} \frac{1}{r^{2\al}} \inf_{B^{\eps}} \norm*{\nabla f_{\eps} - B^{\eps}_{x'}}_{B_r(x')}.
\end{align}
We conclude
\begin{align}
    \sup_{r > 0} \frac{1}{r^{2\al}} \inf_{B} \norm*{\nabla f - B_{x'}}_{B_r(x')} &\stackrel{\eqref{lsc}}{\leq} \liminf_{\eps \to 0} \sup_{r > 0} \frac{1}{r^{2\al}} \inf_{B^{\eps}} \norm*{\nabla f_{\eps} - B^{\eps}_{x'}}_{B_r(x')} \\
    &\stackrel{\eqref{smoothest}}{\lesssim} \sup_{l > 0} \frac{1}{l^{2\al}} \sup_{\abs*{y} \leq l} \inf_{k^{\eps}} \norm*{\delta_y f_{\eps} - k^{\eps}}_{B_{l}(x')} \\
    &\stackrel{\eqref{bound}}{\lesssim} \sup_{l > 0} \frac{1}{l^{2\al}} \sup_{\abs*{y} \leq l} \inf_{k} \norm*{\delta_y f - k}_{B_{l}(x')}.
\end{align}
\end{proof}
In order to include time we extend \cref{Prop1} to space-time functions via the following interpolation inequality. 
\begin{cor}\label{cor1}
    For $f : \R \times \R^d \to \R$ such that $\nabla f \in C^{\al}$ we have
    \begin{align}
        \sup_{r > 0} \frac{1}{r^{2\al}} \inf_{B} \norm*{\nabla f - B_{x'}}_{P_r(z)} \lesssim &\sup_{l > 0} \frac{1}{l^{2\al}} \sup_{\abs*{y} \leq l} \inf_{k \in \R^d} \norm*{\nabla \delta_y f - k}_{P_l(z)} \\ 
        &+ \sum_{i = 0}^d \sup_{r > 0} r^{1 - 2\al} \sup_{\abs*{y} \leq r} \norm*{\partial_t \bra*{\delta_y f}_{r, i}}_{P_{r}(z)}
    \end{align}
\end{cor}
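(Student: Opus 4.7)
The plan is to adapt the argument of \cref{Prop1} to space-time by selecting the affine function $B_{x'}(x) = B(x - x') + b$ with $B$ coming from a space-time version of the ``Hessian at $z$'' extraction and with $b = \nabla f(t', x')$, and then decomposing the error as
\begin{align*}
    \nabla f(t, x) - B(x - x') - b = \underbrace{\bra*{\nabla f(t, x) - \nabla f(t, x') - B(x - x')}}_{(\mathrm{I})} + \underbrace{\bra*{\nabla f(t, x') - \nabla f(t', x')}}_{(\mathrm{II})}.
\end{align*}
Term $(\mathrm{I})$ is a purely spatial error at the fixed time $t$ and will be handled by the argument of \cref{Prop1}; term $(\mathrm{II})$ is a pure time oscillation at the basepoint $x'$ and constitutes the main new difficulty, to be controlled via $M_2$ together with a mollification argument. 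Write $M_1$ and $M_2$ for the first and second terms on the right-hand side of the claim. As in \cref{Prop1}, I treat the smooth case first and recover the general case by mollification and lower semicontinuity.

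For the spatial estimate, the dyadic Cauchy argument leading to \eqref{ldep} in \cref{Prop1} carries over verbatim with $P_l(z)$ in place of $B_l(x')$, since the sup in $M_1$ is taken over the entire parabolic cylinder, so the near-optimal constants $k(y, l) \in \R^d$ are independent of time. This yields a symmetric matrix $B = (a_{ij})$ (equal to $\partial_{ij} f(t', x')$ in the smooth case) such that
\begin{align*}
    \norm*{\tfrac{1}{l}\delta_{le_j} \partial_i f - a_{ij}}_{P_l(z)} \lesssim M_1 l^{2\al - 1}
\end{align*}
for every $l > 0$. The coordinate-wise Taylor telescoping of \eqref{smoothest}, applied at each fixed time $t \in (t' - r^2, t')$ using this uniform (in time) estimate, then gives $|(\mathrm{I})| \lesssim M_1 r^{2\al}$ uniformly for $(t, x) \in P_r(z)$.

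For the temporal estimate, set $g(x) := f_r(t, x) - f_r(t', x)$ and $h(x) := \nabla g(x)$, both of which are smooth. Using $(\delta_y f)_{r, 0} = \delta_y f_r$ and integrating the $i = 0$ term of $M_2$ over the time interval of length $\leq r^2$ gives
\begin{align*}
    \abs*{\delta_y g(x')} = \abs*{\int_{t'}^{t} \partial_s (\delta_y f)_{r, 0}(s, x') \dx{s}} \lesssim r^2 \cdot r^{2\al - 1} M_2 = r^{2\al + 1} M_2 \quad \text{for } \abs*{y} \leq r;
\end{align*}
analogously, using $(\delta_y f)_{r, i} = r\, \delta_y \partial_i f_r$ together with the $i \geq 1$ terms yields $\abs*{\delta_y h(x')} \lesssim r^{2\al} M_2$ for $\abs*{y} \leq r$. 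Writing $\delta_y g(x') = h(x') \cdot y + \int_0^1 \delta_{\theta y} h(x') \cdot y \dx{\theta}$ by the fundamental theorem of calculus and bounding the remainder by $|y| \cdot r^{2\al} M_2 \leq r^{2\al + 1} M_2$, I deduce $\abs*{h(x') \cdot y} \lesssim r^{2\al + 1} M_2$ for every $\abs*{y} \leq r$; choosing $y$ parallel to $h(x')$ of length $r$ then gives $\abs*{h(x')} \lesssim r^{2\al} M_2$. Passing from $\nabla f_r$ back to $\nabla f$ via the mollification error $\abs*{\nabla f_r(s, x') - \nabla f(s, x')} \lesssim M_1 r^{2\al}$---which follows from the spatial Taylor estimate at time $s$ combined with the vanishing first moments of $\psi$---yields $|(\mathrm{II})| \lesssim r^{2\al}(M_1 + M_2)$. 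Summing $(\mathrm{I})$ and $(\mathrm{II})$ by the triangle inequality completes the proof.
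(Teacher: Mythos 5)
Your argument is sound and, in its second half, genuinely different from the paper's proof. The paper applies \cref{Prop1} slice-wise (at the times $t'$ and $t$, with possibly different matrices $B(t',x')$, $B(t,x')$), measures the time increment at the moving point $x$, and controls the mollified time increment through the integration-by-parts identity \eqref{time2}, which yields \eqref{normeqtime} and is then combined with the mean value theorem in time. You instead extract a single time-independent symmetric matrix $B$ by running the dyadic argument of \cref{Prop1} directly on parabolic cylinders, split the error at the basepoint $x'$ into a purely spatial part $(\mathrm{I})$ and a purely temporal part $(\mathrm{II})$, and for $(\mathrm{II})$ you avoid \eqref{time2} altogether: you integrate the $i=0$ and $i\geq 1$ bounds in time (using the correct identity $(\delta_y f)_{r,i}=r\,\delta_y\partial_i f_r$) to control $\delta_y g(x')$ and $\delta_y h(x')$, and then recover $\abs*{h(x')}=\abs*{\nabla f_r(t,x')-\nabla f_r(t',x')}$ by the fundamental theorem of calculus plus optimizing the direction of $y$. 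The treatment of the mollification error via the spatial Taylor estimate and the vanishing first moments of $\psi$ coincides with the paper's. This route is a legitimate alternative and arguably makes the role of the $i=0$ versus $i\geq 1$ terms more transparent.

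One step is stated too quickly: the claim that the telescoping of \eqref{smoothest} applies ``verbatim'' at each fixed time $t\in(t'-r^2,t')$. The cylinder analogue of \eqref{impest} at scale $l=\abs*{y_k}$ only controls $\tfrac{1}{l}\delta_{l e_j}\partial_i f$ on $P_l(z)$, and if $\abs*{y_k}<\sqrt{\abs*{t'-t}}$ the time $t$ does not belong to $P_{\abs*{y_k}}(z)$, so the scale-$\abs*{y_k}$ estimate says nothing at time $t$. You must therefore work entirely at scale comparable to $r$: what is needed is $\norm*{\delta_y\nabla f-By}_{P_r(z)}\lesssim M_1 r^{2\al}$ for \emph{all} $\abs*{y}\leq r$, which does not follow directly from the estimate for axis increments of length equal to the scale. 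It does follow, with a short extra argument, from the approximate additivity in $y$ of the near-optimal constants $k(y,l)$ (via $\delta_{y_1+y_2}\nabla f=\delta_{y_2}\nabla f(\cdot+y_1)+\delta_{y_1}\nabla f$), comparing $k(se_j,2r)$ with $k(se_j,s)$ and the latter with $sB e_j$ via the cylinder analogue of \eqref{ldep}; note that these comparisons of constants may be carried out on small cylinders, so the time restriction is harmless there. This is a fixable wrinkle of the same nature as the one already implicit in the paper's own \eqref{smoothest} (where the intermediate points need not lie in $B_{\abs*{y_k}}(x')$), but in your set-up it is forced by the time variable and should be spelled out; with it, the proof is complete at the paper's level of rigor.
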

\begin{proof}
Let $\bra*{t, x} \in P_r(z)$.
    By \cref{Prop1} there exist a symmetric matrix $B(t', x')$ and a vector $b(t', x')$ such that
    \begin{align}
        \abs*{\nabla f(t', x) - \bra*{B(t', x')\bra*{x - x'} + b(t', x')}} &\lesssim r^{2\al} \sup_{l > 0} \frac{1}{l^{2\al}} \sup_{\abs*{y} \leq l} \inf_{k = k(t)} \norm*{\nabla \delta_y f(t', \cdot) - k}_{B_l(x')} \\
        &\lesssim r^{2\al} \sup_{l > 0} \frac{1}{l^{2\al}} \sup_{\abs*{y} \leq l} \inf_{k \in \R^d} \norm*{\nabla \delta_y f - k}_{P_l(z)}.
    \end{align}
    Via the triangle inequality we split the remainder into a spatial increment and a temporal increment
    \begin{align}
        \abs*{\nabla f(t, x) - \nabla f(t', x)} \leq &\abs*{\nabla f(t, x) - \bra*{\nabla f}_r(t, x)} + \abs*{\bra*{\nabla f}_r(t, x) - \bra*{\nabla f}_r(t', x)} \\ 
        &+ \abs*{\nabla f(t', x) - \bra*{\nabla f}_r(t', x)}.
    \end{align}
    Again by \cref{Prop1} there exist a symmetric matrix $B(t, x')$ and a vector $b(t, x')$ such that
    \begin{align}
        \abs*{\nabla f(t, x) - \bra*{B(t, x')\bra*{x - x'} + b(t, x')}} &\lesssim 
         r^{2\al} \sup_{l > 0} \frac{1}{l^{2\al}} \sup_{\abs*{y} \leq l} \inf_{k \in \R^d} \norm*{\nabla \delta_y f - k}_{P_l(z)}.
    \end{align}
    and, in order to estimate the spatial increment, we appeal to radial symmetry of our mollifier, which implies
    \begin{align}
        B(t, x')(x-x') = \int_{\R^d} \psi_r(x-\zeta) B(t, x')(\zeta - x') \dx{\zeta}
    \end{align}
    and, hence again by \cref{Prop1} and the triangle inequality
    \begin{align}
        &\abs*{\nabla f(t,x) - \bra*{\nabla f}_r(t, x)} \\
        &=\abs*{\nabla f(t,x) - b(t, x') - B(t, x')\bra*{x-x'}  - \int_{\R^d} \psi_{r}(x-\zeta) \bra*{\nabla f(t, \zeta) - b(t, x') - B(t, x')\bra*{\zeta - x'}} \dx{\zeta}} \\
        &\lesssim r^{2\al} \sup_{l > 0} \frac{1}{l^{2\al}} \sup_{\abs*{y} \leq l} \inf_{k \in \R^d} \norm*{\nabla \delta_y f - k}_{P_l(z)}.
    \end{align}
    In order to treat the time difference, we show 
    \begin{align}\label{normeqtime}
        \norm*{\partial_t \bra*{\nabla f}_r}_{P_r(z)} \lesssim \sum_{i = 0}^d \frac{1}{r} \sup_{\abs*{y} \leq r} \norm*{\partial_t \bra*{\delta_y f}_{r, i}}_{P_r(z)}.
    \end{align}
    Let $i = 1, \dots, d$. 
    For any $\hat{t} \in \R$ we compute, appealing to the mean value theorem in space and then Fubini's theorem
    \begin{align}
        \bra*{\partial_i f}_r (\hat{t}, x) - \bra*{\frac{1}{r} \delta_{r e_i} f}_r (\hat{t}, x) &= \int_{\R^d} \psi_r(x - \zeta) \bra*{ \partial_i f(\hat{t}, \zeta) - \int_0^1 \partial_i f(\hat{t}, \zeta + \theta r e_i) \dx{\theta} }\dx{\zeta} \\
        &= - \int_0^1 \int_{\R^d} \psi_r(x -\zeta) \delta_{\theta r e_i} \partial_i f(\hat{t}, \zeta) \dx{\zeta} \dx{\theta}.
    \end{align}
    Integration by parts then yields
    \begin{align}\label{time2}
        \bra*{\partial_i f}_r (\hat{t}, x) - \bra*{\frac{1}{r} \delta_{r e_i} f}_r (\hat{t}, x) = - \int_0^1 \bra*{\frac{1}{r} \delta_{\theta r e_i} f}_{r, i}(\hat{t}, x) \dx{\theta}.
    \end{align}
    Taking the time derivative proves \eqref{normeqtime}.
    
    By the mean value theorem in time, for any $x \in \R^d$ we have
    \begin{align}\label{time1}
        \bra*{\partial_i f}_r(t, x) - \bra*{\partial_i f}_r(t', x) = (t - t') \int_0^1 \partial_t \bra*{\partial_i f}_r(\delta t + (1-\delta)t', x) \dx{\delta}.
    \end{align}
    and combining \eqref{time1} with \eqref{normeqtime} finally yields
    \begin{align}
        \abs*{\bra*{\nabla f}_r(t, x) - \bra*{\nabla f}_r(t', x)} \lesssim r^2 \norm*{\partial_t \bra*{\nabla f}_r}_{P_r(z)} \lesssim \sum_{i=0}^d r \sup_{\abs*{y} \leq r} \norm*{\partial_t \bra*{\delta_y f}_{r, i}}_{P_r(z)}.
    \end{align}
    Hence we have proven that for $\bra*{t, x} \in P_{r}(z)$
    \begin{align}
        \abs*{\nabla f(t, x) - \bra*{B(t', x')\cdot \bra*{x - x'} + b(t', x') }} \lesssim & r^{2\al} \sup_{l > 0} \frac{1}{l^{2\al}} \sup_{\abs*{y} \leq l} \inf_{k \in \R^d} \norm*{\nabla \delta_y f - k}_{P_l(z)} \\ 
        &+ r^{2\al} \sum_{i=0}^d \sup_{l > 0} l^{1 - 2\al} \sup_{\abs*{y} \leq l} \norm*{\partial_t \bra*{\delta_y f}_{l, i}}_{P_{l}(z)}
    \end{align}
\end{proof}
Let $y \in \R^d$. Since $\delta_y u$ satisfies
\begin{align}
    \partial_t \delta_y u - \nabla \cdot a_y \nabla \delta_y u = \partial_t \delta_y v - \nabla \cdot a(t', x') \nabla \delta_y v
\end{align}
we see that $\delta_y w$ satisfies the equation
\begin{align}
	\partial_t \delta_y w - \nabla \cdot a_y \nabla \delta_y w = \nabla \cdot (a_y - a(t', x'))\nabla \delta_y v.
\end{align}
Then, we can write this as a constant coefficient equation 
\begin{align}\label{constcoeffeqw}
	\partial_t \delta_y w - \nabla \cdot a(t', x') \nabla \delta_y w = \nabla \cdot g
\end{align}
by introducing
\begin{align}
    g := (a_y - a(t', x'))\nabla \delta_y u.
\end{align}
Note that \eqref{constcoeffeqw} is now invariant under affine spatial translations, i.e. of functions of the form $\mathrm{aff}(x) := b \cdot x + a$ for some $b \in \R^d, a \in \R$. Hence, we can apply the $C^{1+\al}$ interior Schauder estimate \cite[Theorem 4.8]{L96}, using this invariance, as well as parabolic rescaling, to obtain that, for all $l > 0$ and all space-time points $z = (t', x')$, it holds
\begin{align}
  \quad	\quad \, l^\alpha \inf_{\mathrm{aff}} [\nabla (\delta_y w - \mathrm{aff})]_{\al, P_l(z)} + \inf_{\mathrm{aff}} \norm*{\nabla(\delta_y w - \mathrm{aff})}_{P_l(z)} &\lesssim l^\al \pra*{g}_{\al, P_{2l}(z)} \\
	 &+ l^{-1} \inf_\mathrm{aff} \norm*{\delta_y w - \mathrm{aff}}_{P_{2l}(z)}
\end{align}

which is equivalent to
\begin{align}\label{Nash}
      \quad	\quad \, l^\alpha [\nabla\delta_y w]_{\al, P_l(z)} + \inf_{k \in \R^d} \norm*{\nabla\delta_y w - k}_{P_l(z)} &\lesssim l^\al  \pra*{g}_{\al, P_{2l}(z)} \\
	 &+ l^{-1} \inf_\mathrm{aff} \norm*{\delta_y w - \mathrm{aff}}_{P_{2l}(z)}.
\end{align}
This suggests that we need to further estimate the right hand side which is captured in the next Proposition and which is an extension of \cite[p.73, (34)]{OW19}. The proof is similar.
\begin{prop}\label{Prop2}
Let $z = (t', x')$. For $f: \R \times \mathbb{R}^d \to \R$ continuously differentiable and $y \in \R^d$ we have the estimate
\begin{align}
    \inf_{\mathrm{aff}} \norm*{\delta_y f - \mathrm{aff}}_{P_{l}(z)} \lesssim |y| \inf_B \norm*{\nabla f - B_{x'}}_{P_{2l}(z)}
\end{align}
where $B_{x'}(x) = B(x-x') + b$, $B \in \R^{d \times d}$ is symmetric and $b \in \R^d$.
\end{prop}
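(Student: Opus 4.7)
The plan is to expand $\delta_y f$ along the segment $s\mapsto x+sy$ by the fundamental theorem of calculus, substitute a near-optimal affine approximation of $\nabla f$ into the integrand, and recognize the explicit result as a spatially affine function in $x$ plus a small remainder controlled by $|y|$ times the right hand side.

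Concretely, fix $\epsilon>0$ and pick a symmetric $B\in\R^{d\times d}$ together with $b\in\R^d$ that are $\epsilon$-optimal for the right hand side, so that the remainder $r(t,x):=\nabla f(t,x)-B(x-x')-b$ satisfies $\|r\|_{P_{2l}(z)}\le\rho+\epsilon$, where $\rho$ denotes the infimum on the right. Applying the fundamental theorem of calculus to $s\mapsto f(t,x+sy)$ and integrating the affine part explicitly (using $\int_0^1 s\,ds=\tfrac12$) gives
\begin{align*}
\delta_y f(t,x) \;=\; y\cdot\int_0^1 \nabla f(t,x+sy)\,ds \;=\; y\cdot B(x-x') + \tfrac12\, y\cdot By + y\cdot b + y\cdot\int_0^1 r(t,x+sy)\,ds.
\end{align*}

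Setting
\begin{align*}
\mathrm{aff}(x) \;:=\; y\cdot B(x-x') + \tfrac12\, y\cdot By + y\cdot b,
\end{align*}
which is indeed affine in $x$ with spatial slope $B^T y=By$ (by symmetry of $B$; this also makes the quadratic term $\tfrac12 y\cdot By$ a constant, hence absorbable). Provided $|y|\le l$, for any $(t,x)\in P_l(z)$ and $s\in[0,1]$ the shifted point $(t,x+sy)$ remains in $P_{2l}(z)$, since $|x+sy-x'|\le|x-x'|+|y|\le 2l$ and the time coordinate $t\in(t'-l^2,t')$ is already inside $(t'-4l^2,t')$. Thus
\begin{align*}
|\delta_y f(t,x) - \mathrm{aff}(x)| \;\le\; |y|\,\|r\|_{P_{2l}(z)} \;\le\; |y|(\rho+\epsilon),
\end{align*}
and taking the infimum followed by $\epsilon\to 0$ concludes the argument.

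The one subtlety—which I expect to be the principal obstacle—is the regime $|y|>l$, where the segment $x+sy$ escapes the ball $B_{2l}(x')$ and the pointwise bound on $r$ no longer applies. This case is not needed for the downstream use in the Schauder-type bound \eqref{Nash}, where the proposition is invoked with $|y|\lesssim l$; the cleanest option is therefore to read the statement in that regime. Otherwise one can cheaply enlarge the cylinder on the right (replacing $P_{2l}(z)$ by $P_{l+|y|}(z)$) at the cost of a slightly weaker constant, which does not affect any later application. This argument is essentially a streamlined version of \cite[p.73, (34)]{OW19} extended to space-time cylinders.
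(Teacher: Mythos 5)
Your proof is correct and essentially identical to the paper's: the paper subtracts the quadratic $\frac12(x-x')\cdot B(x-x')+b\cdot(x-x')$ from $f$ and applies the mean value theorem, which produces exactly your affine candidate $y\cdot B(x-x')+\tfrac12 y\cdot By+b\cdot y$ and the same bound $|y|\,\norm*{\nabla f-B_{x'}}_{P_{2l}(z)}$. Your observation about the regime $|y|>l$ applies equally to the paper's mean-value-theorem step and is harmless, since the proposition is only invoked with $|y|\leq r\leq l$.
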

\begin{proof}
For $B \in \R^{d \times d}$ symmetric we define
\begin{align}
    \tilde{f}(t, x) := f(t,x) - \bra*{\frac{1}{2} (x-x')\cdot B(x-x') + b \cdot (x-x')}.
\end{align}
Then we compute
\begin{align}
    \nabla \tilde{f}(t, x) = \nabla f(t, x) - (B(x-x') + b)
\end{align}
as well as
\begin{align}
    \delta_y \tilde{f}(t, x) = \delta_y f(t,x) - \bra*{y\cdot B(x-x') + \frac{1}{2} y\cdot By + b \cdot y}.
\end{align}
Notice that for any $y$ the map $x \mapsto y^TB(x-x') + \frac{1}{2} y^{T}By  + b \cdot y$ is again affine. Thus, by the mean value theorem, we have $\norm*{\delta_y \tilde{f}}_{P_{l}(z)} \leq |y| \norm*{\nabla \tilde{f}}_{P_{2l}(z)}$ which yields the statement.
\end{proof}
\subsection{Proof of the main theorem}
We are now able to prove our main theorem.
\begin{proof}[Proof of Theorem \ref{Thm1}]
In the following $B$ always denotes a symmetric matrix.
We write $M = \sup_{z = (t, x)} M_z$ where $M_z = \inf_B \sup_{r > 0} r^{-2\al} \norm*{\nabla w_{a(t', x')} - B_{x'}}_{P_r(z)}$. For now we assume that $M_z$ is finite for any $z$.
By \cref{cor1}, we have
\begin{align}\label{inter}
    \sup_{r > 0} \frac{1}{r^{2\al}}\inf_{B } \norm*{\nabla w - B_{x'}}_{P_r(z)} &\lesssim \sup_{r > 0} \frac{1}{r^{2\al}} \sup_{\abs*{y} \leq r} \inf_{k} \norm*{\nabla \delta_y w - k}_{P_{r}(z)} \\
    &+ \sum_{i = 0}^d \sup_{r > 0} r^{1-2\al} \sup_{\abs*{y} \leq r} \norm*{\partial_t \bra*{\delta_y w}_{r, i}}_{P_r(z)}.
\end{align}
Let $\abs*{y} \leq r \leq l$. 
First, we focus on the second term on the right hand side of \eqref{inter}. To this end, we recall (cf. \eqref{constcoeffeqw}) that $\delta_y w$ satisfies the constant coefficient equation
\begin{align}\label{ccew}
    \partial_t \delta_y w = \nabla \cdot a(t', x') \nabla \delta_y w + \nabla \cdot g
\end{align}
where 
\begin{align}
    g = (a_y - a(t', x'))\nabla \delta_y u
\end{align}
and we estimate for any $i = 0, \dots, d$
\begin{align}\label{timeder}
    \norm*{\partial_t (\delta_{y} w)_{r, i}}_{P_r(z)} &\lesssim  r^{\al-1} \pra*{a(t', x') \nabla \delta_y w  + g}_{\al, P_{2r}(z)} \\
    &\lesssim r^{\al - 1} \bra*{\pra*{\nabla \delta_y w}_{\al, P_{2r}(z)} + \pra*{g}_{P_{2r}(z)}}.
\end{align}
Hence we have
\begin{align}\label{inter2}
     \sup_{r > 0} \frac{1}{r^{2\al}}\inf_{B } \norm*{\nabla w - B_{x'}}_{P_r(z)} \lesssim &\sup_{r > 0} \frac{1}{r^{2\al}} \sup_{\abs*{y} \leq r} \inf_{k} \norm*{\nabla \delta_y w - k}_{P_{r}(z)} \\
     &+ \sup_{r > 0} r^{-\al} \sup_{\abs*{y} \leq r} \bra*{\pra*{\nabla \delta_y w}_{\al, P_{2r}(z)} + \pra*{g}_{P_{2r}(z)}}.
\end{align}
For $\abs*{y} \leq r \leq l$ we estimate by \eqref{Nash}
\begin{align}\label{firststep}
    \inf_{k} \norm*{\nabla \delta_y w - k}_{P_{r}(z)} \leq r^{\al} \pra*{\nabla \delta_y w}_{\al, P_{l}(z)} \lesssim r^{\al} \pra*{g}_{\al, P_{2l}(z)} + \frac{r^{\al}}{l^{1+\al}} \inf_{\mathrm{aff}} \norm*{\delta_y w - \mathrm{aff}}_{P_{2l}(z)}
\end{align}
as well as 
\begin{align}
    \pra*{\nabla \delta_y w}_{\al, P_{l}(z)} \lesssim \pra*{g}_{\al, P_{2l}(z)} + \frac{1}{l^{1+\al}} \inf_{\mathrm{aff}} \norm*{\delta_y w - \mathrm{aff}}_{P_{2l}(z)}.
\end{align}
Now we turn to the estimate of $\pra*{g}_{\al, P_{2l}(z)}$.
We appeal to the Lipschitz continuity of $DA$ (cf. \eqref{Lip}) and recall the definition of $a(t', x')$ as well as $a_y$ in order to estimate
\begin{align}
	|a_y(t, x) - a(t', x')| &\lesssim \int_0^1 |\theta\nabla u(t, x+y) + (1-\theta)\nabla u(t,x) - \nabla u(t', x')| \dx \theta \\
	&\lesssim \int_0^1 \theta |\nabla u(t, x+y) - \nabla u(t, x)| + |\nabla u(t, x) - \nabla u(t', x')| \dx \theta \\
	&\lesssim |y|^{\al} + d((t, x), (t', x'))^{\al}
\end{align}
and thus, for $|y| \leq r \leq l$ we get
\begin{align}
	\norm*{a_y - a(t', x')}_{P_{2l}(z)} \lesssim l^{\al}.
\end{align}
By \eqref{Hoe}, we have
\begin{align}
	[a_y - a(t', x')]_{\al, P_{2l}(z)} = [a_y]_{\al, P_{2l}(z)} \lesssim [\nabla u]_{\al, P_{3l}(z)}.
\end{align}
Moreover, it holds that
\begin{align}
	\norm*{\nabla \delta_y u}_{P_{2l}(z)} \lesssim l^{\al} [\nabla u]_{\al, P_{3l}(z)} 
\end{align}
as well as
\begin{align}
	[\nabla \delta_y u]_{\al, P_{2l}(z)} \lesssim [\nabla u]_{\al, P_{3l}(z)}.
\end{align}
Summing up, we arrive at
\begin{align}\label{estv}
	\pra*{g}_{\al, P_{2l}(z)} = [(a_y - a(t', x'))\nabla \delta_y u]_{\al, P_{2l}(z)} &\leq [a_y - a(t', x')]_{\al, P_{3l}(z)} \norm*{\nabla \delta_y u}_{P_{3l}(z)} \\ 
	&+ \norm*{a_y - a(t', x')}_{P_{3l}(z)} [\nabla \delta_y u]_{\al, P_{3l}(z)} \\ 
	&\lesssim l^{\al} [\nabla u]_{\al, P_{3l}(z)}
\end{align}
for $|y| \leq r \leq l$. 
Combining \eqref{estv} with \eqref{firststep} yields for $|y| \leq r \leq l$
\begin{align}\label{secondstep}
    r^{\al}\pra*{\nabla \delta_y w}_{\al, P_{l}(z)}
    &\lesssim  l^{2\al} \pra*{\nabla u}_{\al, P_{3l}(z)} + \frac{r^{\al}}{l^{1+\al}} \inf_{\mathrm{aff}} \norm*{\delta_y w - \mathrm{aff}}_{P_{2l}(z)}. 
\end{align}
Then using \cref{Prop2} we further estimate \eqref{secondstep} to the effect that
\begin{align}\label{thirdstep}
    r^{\al}\pra*{\nabla \delta_y w}_{\al, P_{l}(z)}&\lesssim l^{2\al} \pra*{\nabla u}_{\al, P_{3l}(z)} + \bra*{\frac{r}{l}}^{1+\al} \inf_B \norm*{\nabla w - B_{x'}}_{P_{4l}(z)}.
\end{align}
Summing up, we get by \eqref{timeder} and \eqref{thirdstep}
\begin{align}\label{fourthstep}
    &\frac{1}{r^{2\al}} \sup_{\abs*{y} \leq r} \inf_k \norm*{\nabla \delta_y w - k}_{P_r(z)} + \sum_{i = 0}^d r^{1-2\al} \sup_{\abs*{y} \leq r} \norm*{\partial_t \bra*{\delta_y w}_r}_{P_r(z)} \\
    &\lesssim \bra*{\frac{l}{r}}^{2\al} \pra*{\nabla u}_{\al, P_{3l}(z)} + \bra*{\frac{r}{l}}^{1-\al} \frac{1}{l^{2\al}} \inf_B \norm*{\nabla w - B_{x'}}_{P_{4l}(z)}.
\end{align}
Let $K \geq 1$ and set $l = Kr$. By introducing $M'_z := \sup_{r > 0} \frac{1}{r^{2\al}} \inf_B \norm*{\nabla w - B_{x'}}_{P_{r}(z)}$
we estimate combining \eqref{inter} and \eqref{fourthstep}
\begin{align}
	M'_z \lesssim K^{2\al} \pra*{\nabla u}_{\al}  + K^{\al - 1}  M_z
\end{align}
Thus, after using \eqref{aprioriest} as well as \cref{Lem1} we arrive at
\begin{align}
	M'_z \lesssim K^{2\al} C\bra*{\pra*{\nabla v}_{\al}} + K^{\al - 1} M_z
\end{align}
where the implicit constant in $\lesssim$ depends only on $d, \lambda, \Lambda$ and $ \al$ and $C\bra*{\pra*{\nabla v}_{\al}}$ depends polynomially on $\pra*{\nabla v}_{\al}$. 
Since $\al > \frac{1}{2}$, using the reasoning in Step 3 in the proof of Lemma 3.6 of \cite{OW19+} the matrix $B$ is independent of $r$, i.e. 
\begin{align}\label{final1}
    M_z \lesssim  M'_z
\end{align}
and hence
\begin{align}
    M_z \lesssim K^{2\al} C\bra*{\pra*{\nabla v}_{\al}} + K^{\al - 1} M_z.
\end{align}
Choosing $K$ sufficiently large, and since $\al < 1$, as well as the assumption that $M_z$ is finite, we can absorb $K^{\al - 1} M_z$ into the left hand side and end up with
\begin{align}\label{apriori}
	M_z \lesssim C(\pra*{\nabla v}_{\al})
\end{align}
and taking the supremum in $z$ we get
\begin{align}
    	M \lesssim C(\pra*{\nabla v}_{\al}).
\end{align}
In order to get rid of the assumption that $M_z$ is finite we appeal to the following approximation argument. Up to now we made the abbreviation $v = v_{a(t', x')}$ but from now on by $v$ we mean the solution to the SHE \eqref{sthe}. As in the proof of \cite[Theorem 1]{OW19}, we can consider smooth approximations $u_{\eps}$, $v_{a(t', x'), \eps}$ and $v_{\eps}$ to $u$, $v_{a(t', x')}$ and $v$ such that
\begin{align}
    \nabla \bra*{u_{\eps} - v_{a(t', x'), \eps}} \to \nabla \bra*{u - v_{a(t', x')}} = \nabla w
\end{align}
uniformly on compact sets and such that $\pra*{\nabla v_{\eps}}_{\al} \lesssim \pra*{\nabla v}_{\al}$. Then the corresponding constant $M_{z, \eps}$ is finite for all $z$ and hence \eqref{apriori} applies and we have
\begin{align}
    M_{\eps} \lesssim C(\pra*{\nabla v_{\eps}}_{\al}) \lesssim C(\pra*{\nabla v}_{\al}).
\end{align}
Hence, by the same reasoning as in \cref{Prop1}, we conclude by lower-semicontinuity
\begin{align}\label{final2}
    M \lesssim \liminf_{\eps \to 0} M_{\eps} \lesssim C(\pra*{\nabla v}_{\al}).
\end{align}
In other words, \eqref{final2} means that for all $r > 0$ and all space-time points $z = (t', x')$ there exists a family of symmetric matrices $\bra*{B(t', x')}_{(t', x')}$ and a family of vectors $\bra*{b(t', x')}_{(t', x')}$, such that we have
\begin{align}
    |\nabla u(t, x) - \nabla v_{a(t', x')}(t, x) - (B(t', x')(x - x') + b(t', x'))| \lesssim r^{2\al} \ \text{for} \ (t, x) \in P_r(z).
\end{align}
Letting $r \to 0$ implies $(t, x) \to (t', x')$ and hence the optimal $b$ is of the form
\begin{align}
    b(t', x') = \nabla u(t', x') - \nabla v_{a(t', x')}(t', x').    
\end{align}
Then we can choose $r = d((t, x), (t', x'))$ and thus 
\begin{align}
    |\nabla u(t, x) - \nabla v_{a(t', x')}(t, x) - (\nabla u(t', x') - \nabla v_{a(t', x')}(t', x')) - B(t', x')(x - x')| \\
    \lesssim d^{2\al}((t, x), (t', x')),
\end{align}
which we wanted to prove.
\end{proof}
\section*{Acknowledgments}
The author would like to thank Felix Otto for helpful discussions and for bringing this problem to his attention. 
\bibliographystyle{amsplain}
\bibliography{refs}

\end{document}